\documentclass[a4paper,fleqn]{cas-sc}

\usepackage[authoryear,longnamesfirst]{natbib}

\usepackage{graphicx}%
\usepackage{multirow}%
\usepackage{amsmath,amssymb,amsfonts}%
\usepackage{amsthm}%
\usepackage{mathrsfs}%
\usepackage[title]{appendix}%
\usepackage{xcolor}%
\usepackage{textcomp}%
\usepackage{manyfoot}%
\usepackage{booktabs}%
\usepackage{algorithm}%
\usepackage{algorithmicx}%
\usepackage{algpseudocode}%
\usepackage{listings}%
\usepackage{placeins}
\usepackage{lineno}

\newtheorem{theorem}{Theorem}
\newtheorem{remark}{Remark}%
\newtheorem{lemma}{Lemma}%

\def\tsc#1{\csdef{#1}{\textsc{\lowercase{#1}}\xspace}}
\tsc{WGM}
\tsc{QE}

\begin{document}
\let\WriteBookmarks\relax
\def\floatpagepagefraction{1}
\def\textpagefraction{.001}

\shorttitle{}    

\shortauthors{}  

\title [mode = title]{Solution to a fully coupled McKean-Vlasov  forward-backward stochastic difference equation and applications to optimal control with law-delay}  

\tnotemark[1] 

\tnotetext[1]{} 

%

\author[concordia]{Duocheng Wang}
\ead{duocheng.wang@concordia.ca}

\address[concordia]{
Department of Mathematics and Statistics,
Concordia University,
Montreal, QC, Canada
}


\begin{abstract}
In this paper, motivated by discrete-time McKean-Vlasov optimal control problems with law-delay, a class of forward-backward stochastic difference equations is investigated. The main difficulties come from the absence of It$\hat{\rm o}$'s formula in the discrete-time setting, as well as the fully coupled structure in which the coefficients depend on both law-delayed and law-anticipated terms. Under suitable monotonicity conditions, the existence and uniqueness of solutions are established. Furthermore, a new monotonicity condition is introduced to address the associated optimal control problem. As an application, the unique optimal control for linear quadratic systems with law-delay is derived.
\end{abstract}


\begin{highlights}
\item A class of fully coupled McKean–Vlasov forward-backward stochastic difference equations with delayed and anticipated law dependence is investigated, going beyond existing discrete-time models involving only current-law interactions. 
\item The well-posedness of these systems is established under suitable Lipschitz and monotonicity conditions by adapting the continuation method to the discrete-time setting, where the absence of It\^{o}'s formula leads to substantially different derivations from the continuous-time case.
\item A monotonicity condition naturally motivated by discrete-time McKean–Vlasov optimal control problems with law delay is introduced, and an explicit characterization of the unique optimal control for linear quadratic systems is obtained.
\end{highlights}

\begin{keywords}
forward-backward stochastic difference equations \sep McKean-Vlasov equations \sep continuation method\sep optimal control with law-delay
\end{keywords}

\maketitle

\section{Introduction}
Forward-backward stochastic differential equations (FBSDEs) have become a fundamental framework in stochastic analysis due to their broad applications in stochastic control, mathematical finance, and related fields since the 1990s. The well-posedness of fully coupled FBSDEs has been extensively studied.  \cite{ma1994solving} proposed the celebrated four-step scheme for solving Markovian FBSDEs. Subsequently, \cite{hu1995solution} and \cite{peng1999fully} established the existence and uniqueness of fully coupled FBSDEs by means of the continuation method.  The theory was further extended by \cite{ma2015well}, who studied the well-posedness of FBSDEs in a general non-Markovian framework. 

McKean–Vlasov stochastic control has become an active research topic in stochastic analysis and control theory. Such distribution-dependent stochastic systems, also known as mean-field stochastic systems, arise naturally in large population systems, interacting particle systems, and mean-field games. The stochastic maximum principle for McKean–Vlasov control systems has been extensively studied; see, for example, \cite{andersson2011maximum,buckdahn2016stochastic,nie2022extended,dong2022maximum,buckdahn2026global} and the references therein. Delay effects have also been incorporated into continuous-time McKean–Vlasov systems to capture memory and hereditary phenomena. In this direction, stochastic maximum principles have been established for systems with state delays or distribution delays; see, e.g., \cite{chen2019maximum,zhang2021stochastic,guo2024stochastic}. It is worth noting that the distribution delays considered in these works do not involve the control variable.

Discrete-time stochastic control has also attracted considerable attention because many practical decision processes are implemented in discrete time. Unlike the continuous-time setting, the absence of Itô's formula makes the analysis substantially different. \cite{lin2014maximum} first established a stochastic maximum principle for non-linear discrete-time optimal control problems. Building upon their pioneering work, numerous studies have further developed the stochastic maximum principle in the discrete-time setting; see, for example, \cite{wu2022maximum,dong2023maximum,ji2024infinite,niu2026stochastic}. However, the existing literature mainly focuses on systems without distribution dependence or involving only current-law interactions. To the best of our knowledge, neither the solvability of fully coupled McKean–Vlasov forward-backward stochastic difference equations (FBS$\Delta$Es) with delayed and anticipated law dependence nor discrete-time optimal control problems with law-delay have been investigated.

To address these problems, we study a class of fully coupled McKean–Vlasov FBS$\Delta$Es motivated by discrete-time mean-field stochastic control with law delay, where both delayed and anticipated law dependence are incorporated into the system dynamics. Compared with the existing solvability results, the analysis is substantially more challenging due to the simultaneous presence of full coupling and delayed and anticipated law dependence. Moreover, the absence of It\^{o}'s formula in the discrete-time setting makes the derivation significantly different from the continuous-time case. Under suitable Lipschitz and monotonicity conditions, we establish the existence and uniqueness of adapted solutions. We further introduce a new monotonicity condition, which is naturally motivated by McKean-Vlasov optimal control problems with law-delay. As an application of the solvability result, we investigate a class of discrete-time linear quadratic (LQ) McKean–Vlasov optimal control problems with law delay and derive an explicit representation of the unique optimal control.

\section{The forward-backward stochastic difference equations
}
Let $(\Omega,\mathcal{F},\mathbb{P})$ be a probability space.  Let $W$ be a $\mathbb{R}^d$-valued square integrable martingale
process with independent increment has the predictable representation property, such that $\mathbb{E}\left[\Delta W_k(\Delta W_l)^\tau\right]=\delta_{k-l}I_{d\times d}$, where $\Delta W_k=W_{k+1}-W_k$ and $\delta$ is the Dirac function. Let $\mathcal{F}_0\subset \mathcal{F}$ be a sub $\sigma$-algebra, and $\mathbb{F}=(\mathcal{F}_k)_{0\le k\le N}$ be the filtration defined by $\mathcal{F}_k=\mathcal{F}_0\lor\sigma(\Delta W_0,...,\Delta W_{k-1})$. We define the following notations:

$$
\begin{aligned}
C[-\tau,0]^n:= & \left\{\varphi_k:[-\tau,0] \rightarrow \mathbb{R}^n \text { is }\mathcal{F}_0 \text {-measurable and } \mathbb{E}\sum_{k=-\tau}^0|\varphi_k|^2<+\infty\right\} \\
L^2\left(\mathcal{F}_N ; \mathbb{R}^m\right):= & \left\{\varphi \text { is } \mathbb{R}^m \text {-valued } \mathcal{F}_{N} \text {-measurable random variable s.t. } \mathbb{E}|\varphi|^2<+\infty\right\} \\
L_{\mathcal{F}}^2\left(a, b ; \mathbb{R}^n\right):= & \left\{\varphi_k, a \leq k \leq b, \text { is an } \mathcal{G}_k \text {-adapted stochastic process s.t. } \mathbb{E} \sum_{k=a}^b|\varphi_k|^2<+\infty\right\}
\end{aligned}
$$
with $\mathcal{G}_k=\mathcal{F}_{(k\lor 0)\wedge N}$. Here $\tau\in(0,N)$ is a constant time delay parameter.

Denote $\mathcal{T}=\{0,1,...,N-1\}$, $\Delta\psi_k=\psi_{k+1}-\psi_k$ for all adapted processes $\psi$. Let $\mathcal{L}(X)$ be the probability distribution of $X$. We introduce the following McKean-Vlasov FBS$\Delta$Es contains delay and anticipated term:
\begin{equation}\label{fbsde}
\begin{cases}\Delta x_k=b\left(k, u_k,\alpha(u_k),\alpha(u_{k-\tau}),  \beta(u_k)\right)\\
\qquad\qquad+\sigma\left(k,u_k,\alpha(u_k),\alpha(u_{k-\tau}),\beta(u_k)\right)\Delta W_k, & k\in\mathcal{T}, \\ -\Delta y_k=f\left(k+1,u_{k+1},\alpha(u_{k+1}),\alpha(u_{k+\tau+1})\right) -z_k\Delta W_k, & k\in\mathcal{T}, \\ x_k=\varphi_k, & k \in[-\tau, 0], \\ y_N=\Phi\left(x_N\right), \\
y_k=\xi_k, & k \in[N+1, N+\tau], \\ z_k=\eta_k, & k \in[N, N+\tau],\end{cases}
\end{equation}
with
\begin{align}\label{notations}
u_k=\left(\begin{array}{l}
x_k \\
y_k \\
z_k
\end{array}\right), 
\quad
\alpha(u_k)=\left(\begin{array}{l}
\mathcal{L}(x_k) \\
\mathcal{L}(y_k)\\
\mathcal{L}(z_k)
\end{array}\right),
\quad
\beta(u_k)=\left(\begin{array}{l}
\mathcal{L}(y_{k+\tau})\\
 \mathcal{L}(z_{k+\tau})
\end{array}\right).
\end{align}
Here $\varphi_k\in C[-\tau,0]^n$, $\xi_k\in L_{\mathcal{F}}^2\left(N+1, N+\tau ; \mathbb{R}^m\right)$, $\eta_k\in L_{\mathcal{F}}^2\left(N, N+\tau ; \mathbb{R}^{m\times d}\right)$. $\Phi:\Omega\times \mathbb{R}^n\to\mathbb{R}^m$ and
\begin{equation*}
\begin{aligned}
& b: \Omega \times\mathcal{T} \times \mathbb{R}^{n+m+m\times d} \times \left(\mathcal{P}_2(\mathbb{R}^n)\times\mathcal{P}_2(\mathbb{R}^m)\times\mathcal{P}_2(\mathbb{R}^{m\times d})\right)^2\times\left(\mathcal{P}_2(\mathbb{R}^m)\times\mathcal{P}_2(\mathbb{R}^{m\times d})\right)\rightarrow \mathbb{R}^n, \\
& \sigma: \Omega \times\mathcal{T} \times \mathbb{R}^{n+m+m\times d} \times \left(\mathcal{P}_2(\mathbb{R}^n)\times\mathcal{P}_2(\mathbb{R}^m)\times\mathcal{P}_2(\mathbb{R}^{m\times d})\right)^2\times\left(\mathcal{P}_2(\mathbb{R}^m)\times\mathcal{P}_2(\mathbb{R}^{m\times d})\right)\rightarrow \mathbb{R}^{n\times d}, \\
& f: \Omega \times\mathcal{T} \times \mathbb{R}^{n+m+m\times d} \times \left(\mathcal{P}_2(\mathbb{R}^n)\times\mathcal{P}_2(\mathbb{R}^m)\times\mathcal{P}_2(\mathbb{R}^{m\times d})\right)^2\rightarrow \mathbb{R}^m.
\end{aligned}
\end{equation*}
For simplicity, we set $b(N,\cdots)=\sigma(N,\cdots)=f(0,\cdots)=0$. Let $G$ be a given $m\times n$ full-rank matrix. Denote
\begin{align}\label{notationA}
\quad \mathcal{A}(k, u)=\left(\begin{array}{c}
-G^\tau f(k,u_k,\alpha(u_k),\alpha(u_{k+\tau})) \\
G b\left(k, u_k,\alpha(u_k),\alpha(u_{k-\tau}),  \beta(u_k)\right) \\
G \sigma\left(k, u_k,\alpha(u_k),\alpha(u_{k-\tau}),  \beta(u_k)\right)
\end{array}\right).
\end{align}
Then we state the following assumptions:
\begin{enumerate}
	\item[(H1)] There exists a constant $L>0$, such that for all processes $u=(x,y,z)$ and $u'=(x',y',z')$,
    \begin{align*}
    \sum_{k=0}^N\|\mathcal{A}(k,u)-\mathcal{A}(k,u')\|^2\le L\sum_{k=0}^N\left(\|u'_k-u_k\|^2+\mathbb{E}\|u'_k-u_k\|^2\right).
    \end{align*}
Moreover, for each $u$, $\mathcal{A}(\cdot,u)\in L_{\mathcal{F}}^2(0,N)$.
	
\item[(H2)] $\Phi(x)$ is in $L^2\left(\mathcal{F}_N ; \mathbb{R}^m\right)$ and it is uniformly Lipschitz with respect to $x \in \mathbb{R}^n$.

\item[(H3)] There exists constants $\beta_1,\beta_2,\mu\ge0$, such that
\begin{align*}
\mathbb{E}\sum_{k=0}^N\langle\mathcal{A}(k,u')-\mathcal{A}(k,u),u'_k-u_k\rangle\le \mathbb{E}\left[-\sum_{k=0}^{N-1}\beta_1|G\widehat{x}_k|^2-\beta_2\sum_{k=0}^{N-1}\left(|G^\tau\widehat{y}_k|^2+|G\widehat{z}_k|^2\right)\right],
\end{align*}
and $\langle\Phi(x')-\Phi(x), G\widehat{x}\rangle \geq \mu|G \widehat{x}|^2$ for all $x',x\in\mathbb{R}^n$. Moreover, $\beta_1,\mu>0$ (resp. $\beta_2>0$) when $m>n$ (resp. $n>m$) and $\beta_1,\mu>0$ or $\beta_2>0$ as $m=n$.
\end{enumerate}

\begin{remark}
 Most existing works formulate the Lipschitz continuity with respect to probability measures in terms of the Wasserstein distance. For simplicity, we instead adopt the $L^2$-distance, since $W_2^2(X,Y)\le \mathbb{E}|X-Y|^2$.
\end{remark}

Then we show the existence and uniqueness of the solution to FBS$\Delta$Es (\ref{fbsde}).

\begin{theorem}\label{mainthe}
Assume that (H1)-(H3) hold. Then there exists a unique solution $u=(x,y,z)\in L_{\mathcal{F}}^2\left(-\tau,N ; \mathbb{R}^n\right)\times L_{\mathcal{F}}^2\left(0, N+\tau ; \mathbb{R}^m\right)\times L_{\mathcal{F}}^2\left(0, N+\tau ; \mathbb{R}^{m\times d}\right)$ to the FBS$\Delta$Es (\ref{fbsde}).
\end{theorem}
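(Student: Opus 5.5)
We prove Theorem \ref{mainthe} by the continuation method of \cite{hu1995solution} and \cite{peng1999fully}, adapted to the discrete-time, law-dependent setting. We treat the case $\beta_1,\mu>0$ (the case $\beta_2>0$ is symmetric, with the roles of the forward and backward components exchanged).

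For $\lambda\in[0,1]$ we introduce the family of equations
\begin{equation*}
\begin{cases}
\Delta x_k=\lambda b_k-(1-\lambda)\beta_2 G^\tau y_k+\phi_k+\big(\lambda\sigma_k-(1-\lambda)\beta_2 G^\tau z_k+\psi_k\big)\Delta W_k,\\
-\Delta y_k=\lambda f_{k+1}+(1-\lambda)\beta_1 G x_{k+1}+\gamma_{k+1}-z_k\Delta W_k,\\
y_N=\lambda\Phi(x_N)+(1-\lambda)Gx_N+\zeta,
\end{cases}
\end{equation*}
with the same initial and terminal data, and with arbitrary $(\phi,\psi,\gamma,\zeta)$ in the relevant $L^2$ spaces. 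Here $b_k,\sigma_k,f_{k+1}$ abbreviate the coefficients of (\ref{fbsde}) evaluated at $u$ together with the delayed and anticipated laws.

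The argument has three steps.

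\textbf{Step 1: the case $\lambda=0$.} The system contains no law terms and is linear. To solve it, we decouple it using a discrete Riccati-type recursion, or equivalently by interpreting it as the optimality system of a strictly convex linear-quadratic problem. Uniqueness at this step also follows from the a priori estimate in Step 2.

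\textbf{Step 2: the a priori estimate (the core lemma).} Let $u$ and $u'$ solve the $\lambda$-system with data $(\phi,\psi,\gamma,\zeta)$ and $(\phi',\psi',\gamma',\zeta')$, and write $\widehat{u}=u'-u$. We want
\[
\mathbb{E}\sum_k|\widehat u_k|^2\le C\,\mathbb{E}\Big[\sum_k\big(|\widehat\phi_k|^2+|\widehat\psi_k|^2+|\widehat\gamma_k|^2\big)+|\widehat\zeta|^2\Big],
\]
with $C$ independent of $\lambda$.

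Since there is no It\^o formula, we replace it by the discrete product rule
\[
\Delta\langle G\widehat x_k,\widehat y_k\rangle=\langle G\Delta\widehat x_k,\widehat y_k\rangle+\langle G\widehat x_{k+1},\Delta\widehat y_k\rangle .
\]
We take expectations and use $\mathbb{E}[\Delta W_k\Delta W_k^\tau]=I$ together with the martingale property. This yields the cross term $\mathbb{E}\,\mathrm{tr}\big((G\widehat\sigma_k)^\tau\widehat z_k\big)$ from $\langle G\widehat\sigma_k\Delta W_k,\widehat y_{k+1}\rangle$. Summing from $k=0$ to $N-1$ gives $\mathbb{E}\langle G\widehat x_N,\widehat y_N\rangle$ on the left, since $\widehat x_0=0$.

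The main obstacle is the index shift. The $f$-term appears at time $k+1$ paired with $\widehat x_{k+1}$, while the $b$ and $\sigma$ terms appear at time $k$. We must also account for the conventions $b(N,\cdot)=\sigma(N,\cdot)=f(0,\cdot)=0$, which are precisely what makes the sum reorganize into $\sum_{k=0}^N\langle\mathcal{A}(k,u')-\mathcal{A}(k,u),\widehat u_k\rangle$. The law-delay and law-anticipation terms are absorbed automatically because (H3) is stated on whole processes. In addition, $\widehat y$ and $\widehat z$ vanish on $[N+1,N+\tau]$, and $\widehat x$ vanishes on $[-\tau,0]$.

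Applying (H3) and the monotonicity of $\Phi$ then gives
\[
\lambda\mu\,\mathbb{E}|G\widehat x_N|^2+(1-\lambda)\mathbb{E}|G\widehat x_N|^2+\beta_1\mathbb{E}\sum_k|G\widehat x_k|^2\le\text{(data terms)}\times|\widehat u|.
\]
Standard discrete estimates then close the bound. For the forward equation, we take squares and use Gronwall in discrete form to bound $\widehat x$ by $\widehat y,\widehat z$. For the backward equation, we use $\mathbb{E}|\widehat y_k|^2+\mathbb{E}|\widehat z_k|^2\le\mathbb{E}|\widehat y_{k+1}+\widehat f_{k+1}|^2$ to bound $\widehat y,\widehat z$ by $\widehat x$, where the law terms are controlled by (H1). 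The full rank of $G$ (together with $m\ge n$ here) converts the bounds on $|G\widehat x|$ into bounds on $|\widehat x|$.

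\textbf{Step 3: continuation.} Suppose the $\lambda_0$-system is solvable for all data. For $\lambda=\lambda_0+\delta$, we define a map $U\mapsto u$ by solving the $\lambda_0$-system with modified data, for instance
\[
\phi+\delta\big(b(U)+\beta_2G^\tau Y\big),
\]
and analogously for $\psi,\gamma,\zeta$. By Step 2 combined with (H1) and (H2), this map is a contraction on the product $L^2_{\mathcal F}$ space whenever $\delta\le\delta_0$, where $\delta_0$ depends only on $L$, $\beta_i$, $\mu$, $G$, $N$. Iterating finitely many steps reaches $\lambda=1$, which gives existence. Uniqueness follows directly from Step 2 with $\lambda=1$ and zero data.
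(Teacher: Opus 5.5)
Your proposal is correct and is essentially the paper's continuation argument: the discrete product rule for $\langle G\widehat x_k,\widehat y_k\rangle$ is reorganized into $\sum_{k=0}^N\langle\mathcal{A}(k,u')-\mathcal{A}(k,u),\widehat u_k\rangle$; monotonicity together with the injectivity of $G$ controls $\widehat x$; the backward estimate then controls $(\widehat y,\widehat z)$; and the contraction step $\delta_0$ is independent of the base point. You merely isolate the stability estimate as a separate lemma, and your attribution of the cross term needs one correction: it comes from $\langle G\widehat x_{k+1},\widehat z_k\Delta W_k\rangle$, not from $\langle G\widehat\sigma_k\Delta W_k,\widehat y_{k+1}\rangle$, because under your product rule the forward increment is paired with $\widehat y_k$ and $\langle G\widehat\sigma_k\Delta W_k,\widehat y_k\rangle$ has zero mean.

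The only substantive difference is in the homotopy. Yours keeps both the $\beta_1$- and $\beta_2$-terms, so when $\beta_2>0$ your $\lambda=0$ system is fully coupled and genuinely needs the LQ/Riccati argument you only sketch. The paper keeps only the $\beta_1$-term for $m>n$ (only the $\beta_2$-term for $m<n$), so its $\varepsilon=0$ system decouples and is trivially solvable. Dropping the $\beta_2$-term in your case $\beta_1,\mu>0$ would give you the same simplification.
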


\begin{proof}
We first deal with the case  $m>n, \beta_1,\mu>0$ by introducing the following FBS$\Delta$Es with $\varepsilon\in[0,1]$, which becomes FBS$\Delta$Es (\ref{fbsde}) when $\varepsilon=1$.
\begin{align}\label{fbsde1}
\begin{cases}\Delta x^\varepsilon_k=\varepsilon b\left(k, u^\varepsilon_k,\alpha(u^\varepsilon_k),\alpha(u^\varepsilon_{k-\tau}),  \beta(u^\varepsilon_k)\right)+\phi_k\\
\qquad\qquad+\left[\varepsilon\sigma\left(k,u^\varepsilon_k,\alpha(u^\varepsilon_k),\alpha(u^\varepsilon_{k-\tau}),\beta(u^\varepsilon_k)\right)+\psi_k\right]\Delta W_k, & k\in\mathcal{T}, \\ 
-\Delta y^\varepsilon_k=\varepsilon f\left(k+1,u^\varepsilon_{k+1},\alpha(u^\varepsilon_{k+1}),\alpha(u^\varepsilon_{k+\tau+1})\right) \\
\qquad\qquad\qquad+(1-\varepsilon)\beta_1Gx^\varepsilon_{k+1}+\gamma_{k+1}-z^\varepsilon_k\Delta W_k, & k\in\mathcal{T}, \\ x^\varepsilon_k=\varphi_k, & k \in[-\tau, 0], \\ y^\varepsilon_N=\varepsilon\Phi\left(x^\varepsilon_N\right)+(1-\varepsilon)\mu Gx_N^\varepsilon+\theta, \\
y^\varepsilon_k=\xi_k, & k \in[N+1, N+\tau], \\ z^\varepsilon_k=\eta_k, & k \in[N, N+\tau].\end{cases}
\end{align}
Here $\phi,\psi,\gamma\in L^2_{\mathcal{F}}(0,N)$ with suitable dimensions and $\theta\in L^2(\mathcal{F}_T,\mathbb{R}^m)$. It is clear that there exists  a unique solution to the case $\varepsilon=0$, and our aim is to prove there exists  a unique solution to the case $\varepsilon=1$. To show the  existence and uniqueness for the FBSDEs (\ref{fbsde1}) with $\varepsilon\in(0,1]$, we need the following Lemma.
\begin{lemma}\label{existence m>n}
Let $m>n$ and (H1)-(H3) hold. If for some $\varepsilon_0\in[0,1)$, there exist unique solution $(x^{\varepsilon_0},y^{\varepsilon_0},z^{\varepsilon_0})\in L_{\mathcal{F}}^2\left(-\tau,N ; \mathbb{R}^n\right)\times L_{\mathcal{F}}^2\left(0, N+\tau ; \mathbb{R}^m\right)\times L_{\mathcal{F}}^2\left(0, N+\tau ; \mathbb{R}^{m\times d}\right)$ to the FBS$\Delta$Es (\ref{fbsde1}). Then there exists a unique solution $(x^{\varepsilon_0+\delta},y^{\varepsilon_0+\delta},z^{\varepsilon_0+\delta})$ to (\ref{fbsde1}) for $\varepsilon=\varepsilon_0+\delta$, where $\delta\in[0,\delta_0]$ for  some constants $\delta_0>0$ independent with $\varepsilon_0$.
\end{lemma}
\begin{proof}
Let $u^i=(x^i,y^i,z^i), i=1,2$, be given processes in $L_{\mathcal{F}}^2\left(-\tau,N ; \mathbb{R}^n\right)\times L_{\mathcal{F}}^2\left(0, N+\tau ; \mathbb{R}^m\right)\times L_{\mathcal{F}}^2\left(0, N+\tau ; \mathbb{R}^{m\times d}\right)$. Under the assumptions, we know there exists unique $U^i_t=(X^i_t,Y^i_t,Z^i_t),\, i=1,2$, which is the solution to the following FBS$\Delta$Es,
\begin{align}\label{fbsde2}
\begin{cases}\Delta X^i_k=\left[\varepsilon_0 b\left(k,U^i\right)+\delta b(k,u^i)+\phi_{k}\right]
&\\
\qquad\quad+\left[\varepsilon_0\sigma\left(k,U^i\right)+\delta \sigma(k,u^i)+\psi_{k}\right] \Delta W_k, & k\in\mathcal{T}, \\ 
-\Delta Y^i_k=\Big[(1-\varepsilon_0)\beta_1GX^i_{k+1}+\varepsilon_0 f\left(k+1,U^i\right)\\
\qquad\qquad+\delta\left(-\beta_1Gx^i_{k+1}+f(k+1,u^i)\right)+\gamma_{k+1}\Big]-Z^i_k\Delta W_k, & k\in\mathcal{T}, \\ 
X^i_k=\varphi_k, & k \in[-\tau, 0], \\  Y^i_N=\varepsilon_0\Phi\left(X^i_N\right)+(1-\varepsilon_0)G\mu X_N^i+\delta\left(\Phi(x^i_N)-\mu Gx_N^i\right)+\theta,&\\
Y^i_k=\xi_k, & t \in[N+1,N+\tau], \\ Z^i_k=\eta_k, & k \in[N,N+\tau],\end{cases}
\end{align}
where
\begin{align*}
&\rho(k,a)=\rho\left(k, a_k,\alpha(a_k),\alpha(a_{k-\tau}),  \beta(a_k)\right),\quad \rho=b,\sigma,\\
&f(k,a)=\left(k,u_{k},\alpha(u_{k}),\alpha(u_{k+\tau})\right),
\end{align*}
for $a=U^1,U^2,u^1,u^2$.

Then we aim to show the mapping $I: u\to U$ from $L_{\mathcal{F}}^2\left(-\tau,N ; \mathbb{R}^n\right)\times L_{\mathcal{F}}^2\left(0, N+\tau ; \mathbb{R}^m\right)\times L_{\mathcal{F}}^2\left(0, N+\tau ; \mathbb{R}^{m\times d}\right)$ onto itself is a contraction under the norm
\begin{align*}
\|U\|^2=\|(X,Y,Z)\|^2=\mathbb{E}\left[\sum_{k=-\tau}^N|X_k|^2+\sum_{k=0}^{N+\tau}\left(|Y_k|^2+|Z_k|^2\right)\right].
\end{align*}
Denote $\widehat{U}_k=U^1_k-U^2_k, \,\widehat{u}_k=u^1_k-u^2_k$, we have
\begin{align}\label{difference}
\Delta\langle G\widehat{X}_k,\widehat{Y}_k\rangle
=&\langle G\Delta\widehat{X}_k,\widehat{Y}_k\rangle+\langle G\widehat{X}_{k+1},\Delta\widehat{Y}_k\rangle\notag\\
=&\varepsilon_0\left[\langle -G^\tau \widehat{f}(k+1,U),\widehat{X}_{k+1}\rangle+\langle G\widehat{b}(k,U),\widehat{Y}_k\rangle+\langle G\widehat{\sigma}(k,U),\widehat{Z}_k\rangle\right]\notag\\
&-(1-\varepsilon_0)\beta_1|G\widehat{X}_{k+1}|^2+\langle M_k,\Delta W_k\rangle\notag\\
&+\delta\left[\langle G\widehat{b}(k,u),\widehat{Y}_k\rangle+\langle G\widehat{X}_{k+1},\beta_1G\widehat{x}_{k+1}-\widehat{f}(k+1,u)\rangle+\langle G\widehat{\sigma}(k,u),\widehat{Z}_k\rangle\right],
\end{align}
where $M_k$ is a $\mathcal{F}_k$-adapted martingale and $\widehat{\rho}(k,a)=\rho(k,a^1)-\rho(k,a^2)$ for $\rho=b,\sigma,f$ and $a=U,u$.
Then through (H1)-(H3), we have
\begin{align*}
\mathbb{E}\langle G\widehat{X}_N,\widehat{Y}_N\rangle=&\mathbb{E}\sum_{k=0}^{N-1}\Delta\langle G\widehat{X}_k,\widehat{Y}_k\rangle\\
=&\varepsilon_0\mathbb{E}\sum_{k=0}^N\langle\mathcal{A}(k,U^1)-\mathcal{A}(k,U^2),\widehat{U}_k\rangle\\
&-(1-\varepsilon_0)\beta_1\mathbb{E}\sum_{k=1}^{N}|G\widehat{X}_k|^2+\delta\mathbb{E}\sum_{k=1}^N\langle G\widehat{X}_{k},\beta_1G\widehat{x}_{k}-\widehat{f}(k,u)\rangle\\
&+\delta\mathbb{E}\sum_{k=0}^{N-1}\left[\langle G\widehat{b}(k,u),\widehat{Y}_k\rangle+\langle G\widehat{\sigma}(k,u),\widehat{Z}_k\rangle\right]\\
\le& -\beta_1\mathbb{E}\sum_{k=0}^{N-1}|G\widehat{X}_k|^2+\delta C\left[\|\widehat{U}\|^2+\|\widehat{u}\|^2\right],
\end{align*}
where $C$ is a positive constant independent with $\varepsilon_0,\delta$, which may vary from line to line. On the other hand, due to (H2) and (H3), we have
\begin{align*}
\mathbb{E}\langle G\widehat{X}_N,\widehat{Y}_N\rangle\ge& \mu\mathbb{E}|G\widehat{X}_N|^2-\delta C\left[|\widehat{X}_N|^2+|\widehat{x}_N|^2\right].
\end{align*}
So we conclude
\begin{align*}
\mathbb{E}\sum_{k=0}^{N}|\widehat{X}_k|^2\le\delta C\left[\|\widehat{U}\|^2+\|\widehat{u}\|^2\right].
\end{align*}
Then through the estimation of the backward equation and the Lipschitz continuous of $f$, we have
\begin{align*}
\mathbb{E}\sum_{k=0}^N\left(|\widehat{Y}_k|^2+|\widehat{Z}_k|^2\right)\le C\mathbb{E}\sum_{k=0}^N|X_k|^2+\delta C\left[\|\widehat{U}\|^2+\|\widehat{u}\|^2\right].
\end{align*}
Above all, with $\sum_{k=-\tau}^0|\widehat{X}_k|^2=\sum_{k=N+1}^{N+\tau}\left(|\widehat{Y}_k|^2+|\widehat{Z}_k|^2\right)=0$, we derive
\begin{align*}
\|U\|^2\le \delta C\left[\|U\|^2+\|u\|^2\right].
\end{align*}
Choosing $\delta_0=\frac{1}{3C}$, it follows
\begin{align*}
\|\widehat{U}\|^2\le \frac{1}{2} \|\widehat{u}\|^2.
\end{align*}
Thus, the mapping $I:u\to U$ is a contraction, so that there exists a unique fixed point in $L_{\mathcal{F}}^2\left(-\tau,N ; \mathbb{R}^n\right)\times L_{\mathcal{F}}^2\left(0, N+\tau ; \mathbb{R}^m\right)\times L_{\mathcal{F}}^2\left(0, N+\tau ; \mathbb{R}^{m\times d}\right)$, which is the unique solution to FBS$\Delta$Es (\ref{fbsde1}) for $\varepsilon=\varepsilon_0+\delta$.

This completes the proof of Lemma \ref{existence m>n}.
\end{proof}

For the case $m<n$, similar to the case $m>n$, we have the following Lemma.
\begin{lemma}\label{existence m<n}
Let $m<n$ and (H1)-(H3) hold.  If for some $\varepsilon_0\in[0,1)$, there exist unique solution $(x^{\varepsilon_0},y^{\varepsilon_0},z^{\varepsilon_0})\in L_{\mathcal{F}}^2\left(-\tau,N ; \mathbb{R}^n\right)\times L_{\mathcal{F}}^2\left(0, N+\tau ; \mathbb{R}^m\right)\times L_{\mathcal{F}}^2\left(0, N+\tau ; \mathbb{R}^{m\times d}\right)$ to the FBS$\Delta$Es
\begin{align}\label{fbsde4}
\begin{cases}\Delta x^\varepsilon_k=-(1-\varepsilon)\beta_2G^\tau y_k^\varepsilon+\varepsilon b\left(k, u^\varepsilon_k,\alpha(u^\varepsilon_k),\alpha(u^\varepsilon_{k-\tau}),  \beta(u^\varepsilon_k)\right)+\phi_k\\
\qquad\quad+\left[-(1-\varepsilon)\beta_2G^\tau z_k^\varepsilon+\varepsilon\sigma\left(k,u^\varepsilon_k,\alpha(u^\varepsilon_k),\alpha(u^\varepsilon_{k-\tau}),\beta(u^\varepsilon_k)\right)+\psi_k\right]\Delta W_k, & k\in\mathcal{T}, \\ 
-\Delta y^\varepsilon_k=\varepsilon f\left(k+1,u^\varepsilon_{k+1},\alpha(u^\varepsilon_{k+1}),\alpha(u^\varepsilon_{k+\tau+1})\right)+\gamma_{k+1}-z^\varepsilon_k\Delta W_k, & k\in\mathcal{T}, \\ x^\varepsilon_k=\varphi_k, & k \in[-\tau, 0], \\ y^\varepsilon_N=\varepsilon\Phi\left(x^\varepsilon_N\right)+\theta, \\
y^\varepsilon_k=\xi_k, & k \in[N+1, N+\tau], \\ z^\varepsilon_k=\eta_k, & k \in[N, N+\tau],\end{cases}
\end{align}
Then there exists a unique solution $(x^{\varepsilon_0+\delta},y^{\varepsilon_0+\delta},z^{\varepsilon_0+\delta})$ to (\ref{fbsde4}) for $\varepsilon=\varepsilon_0+\delta$, where $\delta\in[0,\delta_0]$ for  some constants $\delta_0>0$ independent with $\varepsilon_0$.
\end{lemma}
\begin{proof}
The proof is similar to that of Lemma \ref{existence m>n} and is therefore omitted.
\end{proof}

Now, through induction, we have proved the existence and uniqueness of the solution to FBS$\Delta$Es (\ref{fbsde}) when $m>n$ or $m<n$. For the case $m=n,$ it directly follows the case $m>n$ (resp. $m<n$) if $\beta_1,\mu_1>0$ (resp. $\beta_2>0$).

This completes the proof of Theorem \ref{mainthe}.
\end{proof}

\section{Applications to optimal control}
In this section, we consider the LQ McKean-Vlasov optimal control problem with law-delayed. We first consider the following explicit form of FBS$\Delta$Es.
\begin{equation}\label{fbsde6}
\begin{cases}\Delta x_k=b\left(k, x_k,\mathbb{E}x_k,\mathbb{E}x_{k-\tau},u_{1,k}\right)\\
\qquad\qquad+\sigma\left(k, x_k,\mathbb{E}x_k,\mathbb{E}x_{k-\tau},u_{1,k}\right)\Delta W_k, & k\in\mathcal{T}, \\ -\Delta y_k=f\left(k+1,u_{k+1},\mathbb{E}u_{k+1},\mathbb{E}y_{k+\tau+1},\mathbb{E}z_{k+\tau+1}\right) -z_k\Delta W_k, & k\in\mathcal{T}, \\ x_k=\varphi_k, & k \in[-\tau, 0], \\ y_N=\Phi\left(x_N\right), \\
y_k=\xi_k, & k \in[N+1, N+\tau], \\ z_k=\eta_k, & k \in[N, N+\tau],\end{cases}
\end{equation}
with
\begin{align*}
u_{1,k}=&\left(y_k,\mathbb{E}y_k,\mathbb{E}y_{k-\tau},\mathbb{E}y_{k+\tau},z_k,\mathbb{E}z_k,\mathbb{E}z_{k-\tau},\mathbb{E}z_{k+\tau}\right).
\end{align*}
Again, we use the notation
\begin{align*}
\quad \mathcal{A}(k,u)=\left(\begin{array}{c}
-f\left(k,u_{k},\mathbb{E}u_{k},\mathbb{E}y_{k+\tau},\mathbb{E}z_{k+\tau}\right) \\
b\left(k, x_k,\mathbb{E}x_k,\mathbb{E}x_{k-\tau},u_{1,k}\right) \\
\sigma\left(k, x_k,\mathbb{E}x_k,\mathbb{E}x_{k-\tau},u_{1,k}\right)
\end{array}\right),
\end{align*}
and assume that
\begin{enumerate}
    \item[(H4)]$b(k,x,\bar{x},\bar{x}_{\tau_-},u_1), \sigma(k,x,\bar{x},\bar{x}_{\tau_-},u_1), f(k,u,\bar{u},\bar{y}_{\tau_+},\bar{z}_{\tau_+}),\Phi(x)$ are Lipschitz continuous w.r.t $(x,\bar{x},\bar{x}_{\tau_-},u_1,\bar{u},\bar{y}_{\tau_+},\bar{z}_{\tau_+})$ for all $k\in[0,N]$. Moreover, for all adapted processes $u'=(x',y',z')$, $u=(x,y,z)$ and $\widehat{u}=u'-u$, there exists a constant $L>0$, such that
    \begin{align*}
      \mathbb{E}\sum_{k=0}^{N-1}&|b(k,x,\bar{x},\bar{x}_{\tau_-},u_{1,k}')-b(k,x,\bar{x},\bar{x}_{\tau_-},u_{1,k})|+|\sigma(k,x,\bar{x},\bar{x}_{\tau_-},u_{1,k}')-\sigma(k,x,\bar{x},\bar{x}_{\tau_-},u_{1,k})|\\
        &\le L \mathbb{E}\sum_{k=0}^{N-1}|B_1\widehat{y}_k+B_2\mathbb{E}\widehat{y}_k+B_3\mathbb{E}\widehat{y}_{k+\tau}+D_1\widehat{z}_k+D_2\mathbb{E}\widehat{z}_k+D_3\mathbb{E}\widehat{z}_{k+\tau}|,
    \end{align*}
    for some given constants $B_i, D_i, i=1,2,3$.
    \item [(H5)] There exists a constant $\beta>0$, such that
    \begin{align*}
&\mathbb{E}\sum_{k=0}^N\langle\mathcal{A}(k,u')-\mathcal{A}(k,u),u'_k-u_k\rangle\\\le &-\beta\mathbb{E}\sum_{k=0}^{N-1}|B_1\widehat{y}_k+B_2\mathbb{E}\widehat{y}_k+B_3\mathbb{E}\widehat{y}_{k+\tau}+D_1\widehat{z}_k+D_2\mathbb{E}\widehat{z}_k+D_3\mathbb{E}\widehat{z}_{k+\tau}|^2,
\end{align*}
and $\mathbb{E}\langle\Phi(x')-\Phi(x), G\widehat{x}\rangle \ge0$ for all $x',x\in L(\mathcal{F}_N;\mathbb{R}^n)$.
\end{enumerate}

Then we give the following result.
\begin{theorem}
Let assumptions (H4) and (H5) hold. Then there exists a unique solution to FBSDEs (\ref{fbsde6}).
\end{theorem}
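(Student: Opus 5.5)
We run the continuation method of Theorem \ref{mainthe}, this time deforming the drift and diffusion coefficients toward the degenerate direction singled out by (H5). For $\varepsilon\in[0,1]$ consider the family obtained from (\ref{fbsde6}) by replacing $b$ and $\sigma$ with
\begin{align*}
&\varepsilon b(k,x_k,\mathbb{E}x_k,\mathbb{E}x_{k-\tau},u_{1,k})-(1-\varepsilon)\beta\,\Theta_k^{\tau}+\phi_k,\\
&\varepsilon \sigma(k,x_k,\mathbb{E}x_k,\mathbb{E}x_{k-\tau},u_{1,k})-(1-\varepsilon)\beta\,\Theta_k^{\tau}+\psi_k.
\end{align*}
Here
\begin{align*}
\Theta_k=B_1y_k+B_2\mathbb{E}y_k+B_3\mathbb{E}y_{k+\tau}+D_1z_k+D_2\mathbb{E}z_k+D_3\mathbb{E}z_{k+\tau},
\end{align*}
with the adjoint operations chosen so that the pairing in the duality identity below produces exactly $-(1-\varepsilon)\beta\,\mathbb{E}|\widehat\Theta_k|^2$. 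We also replace $f$ with $\varepsilon f+\gamma$ and $\Phi$ with $\varepsilon\Phi+\theta$.

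\textbf{Base case $\varepsilon=0$.} The system is a linear mean-field FBS$\Delta$E whose forward coefficients depend on $(y,z)$ through $\Theta$ and on anticipated means $\mathbb{E}y_{k+\tau},\mathbb{E}z_{k+\tau}$. The backward part, however, is decoupled. We first solve the backward anticipated mean-field equation $-\Delta y_k=\gamma_{k+1}-z_k\Delta W_k$, $y_N=\theta$, with $y,z$ prescribed by $\xi,\eta$ after $N$. Solvability here is explicit in discrete time: take $y_k=\mathbb{E}[\,\cdot\,|\mathcal{F}_k]$ backward in $k$ and obtain $z_k$ from the predictable representation. With $(y,z)$ in hand, we plug them into the forward equation and solve it recursively from $x_k=\varphi_k$ for $k\le0$.

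\textbf{Continuation step.} Suppose the system is uniquely solvable at $\varepsilon_0$. We build the map $u\mapsto U$ exactly as in Lemma \ref{existence m>n}. Expanding $\Delta\langle\widehat X_k,\widehat Y_k\rangle$ as in (\ref{difference}) and summing over $k$ (using $G=I$ and the mean-field terms), then applying (H5) for the $\varepsilon_0$ part and the explicit $-(1-\varepsilon_0)\beta|\widehat\Theta|^2$ term, we obtain
\begin{align*}
0\le \mathbb{E}\langle \widehat X_N,\widehat Y_N\rangle\le -\beta\,\mathbb{E}\sum_{k=0}^{N-1}|\widehat\Theta^U_k|^2+\delta C\big(\|\widehat U\|^2+\|\widehat u\|^2\big).
\end{align*}
The left inequality uses the monotonicity of $\Phi$ in (H5), which has constant $0$, up to a $\delta$-perturbation. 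This bounds $\mathbb{E}\sum_k|\widehat\Theta^U_k|^2$ by $\delta C(\cdots)$.

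\textbf{Closing the estimate.} We then need $\|\widehat X\|$. Here (H4) is essential: the differences of $b$ and $\sigma$ in the $u_1$ variable are controlled by $\mathbb{E}|\widehat\Theta|$ (in $L^1$, hence via Cauchy--Schwarz by the $L^2$ norm), while their differences in $x$ are Lipschitz. A discrete Gronwall argument, run forward and recursively including the delayed mean $\mathbb{E}x_{k-\tau}$, then gives
\begin{align*}
\mathbb{E}\sum_k|\widehat X_k|^2\le C\,\mathbb{E}\sum_k|\widehat\Theta_k|^2+\delta C(\cdots).
\end{align*}
Next, the standard a priori estimate for backward anticipated mean-field difference equations bounds $(\widehat Y,\widehat Z)$ by $\widehat X$ plus $\delta$-terms. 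Here the anticipated means $\mathbb{E}y_{k+\tau+1}$ are handled backward in time, since they refer to later times, which are already controlled. Combining the three estimates gives $\|\widehat U\|^2\le\delta C(\|\widehat U\|^2+\|\widehat u\|^2)$, which is a contraction for $\delta_0$ small and independent of $\varepsilon_0$. Iterating up to $\varepsilon=1$ yields existence, and the same estimate with $\delta=0$ applied to two solutions of (\ref{fbsde6}) yields uniqueness.

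\textbf{Main obstacle.} The key difficulty is that (H5) gives no coercivity in $\widehat x$ and none at the terminal time. Control must therefore flow entirely through $\Theta$ into the forward equation via (H4). Making the Gronwall step rigorous with the $L^1$-type bound of (H4) and the delayed and anticipated expectations, while keeping all constants independent of $\varepsilon_0$, is the delicate point.
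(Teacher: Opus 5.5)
Your architecture matches what the paper intends; its proof is only the remark that one argues as in the $\beta_2>0$ case of Theorem~\ref{mainthe}. You perturb the forward coefficients by $-(1-\varepsilon)\beta$ times the adjoint of $(y,z)\mapsto\Theta$, start from the decoupled system at $\varepsilon=0$, use (H5) to control $\widehat\Theta$, and then pass to $\widehat X$ through (H4) and to $(\widehat Y,\widehat Z)$ through the backward estimate.

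The step that fails as written is the passage from $\widehat\Theta$ to $\widehat X$. Taken literally, (H4) is an $L^1$ bound, $\mathbb{E}\sum_k(|\widehat b_{u_1,k}|+|\widehat\sigma_{u_1,k}|)\le L\,\mathbb{E}\sum_k|\widehat\Theta_k|$, where $\widehat b_{u_1,k}$ denotes the increment of $b$ in the $u_1$-variable only. Cauchy--Schwarz turns the right-hand side into $C(\mathbb{E}\sum_k|\widehat\Theta_k|^2)^{1/2}$. The left-hand side, however, is still an $L^1$ norm of the coefficient increments. The recursion $\widehat X_{k+1}=\widehat X_k+\widehat b_k+\widehat\sigma_k\Delta W_k$ gives
\begin{align*}
\mathbb{E}|\widehat X_{k+1}|^2\le C\big(\mathbb{E}|\widehat X_k|^2+|\mathbb{E}\widehat X_{k-\tau}|^2+\mathbb{E}|\widehat b_{u_1,k}|^2+\mathbb{E}|\widehat\sigma_{u_1,k}|^2\big)+\cdots,
\end{align*}
with the dots standing for the perturbation and $\delta$-terms. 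An $L^1$ bound does not control these second moments. The only $L^2$ information you have on $\widehat b_{u_1},\widehat\sigma_{u_1}$ is plain Lipschitz continuity in $(\widehat Y,\widehat Z)$, which is not of order $\delta$. So $\|\widehat U\|^2\le\delta C(\|\widehat U\|^2+\|\widehat u\|^2)$ does not follow, and the contraction, hence existence, is not established.

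Uniqueness does survive: $\widehat\Theta=0$ forces $\widehat b_{u_1}=\widehat\sigma_{u_1}=0$ a.s.\ even from the $L^1$ bound. For existence you need (H4) in squared form, $\mathbb{E}\sum_k(|\widehat b_{u_1,k}|^2+|\widehat\sigma_{u_1,k}|^2)\le L\,\mathbb{E}\sum_k|\widehat\Theta_k|^2$. This is what the analogy with the quadratic hypothesis (H1) requires. It also holds in the LQ application, because $b$ and $\sigma$ are affine in $v=-R^{-1}\Theta$. With that reading, your Gronwall and backward estimates go through.

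A smaller point: no choice of adjoint produces \emph{exactly} $-(1-\varepsilon)\beta\,\mathbb{E}\sum_k|\widehat\Theta_k|^2$. For $k=N-\tau$, $\widehat\Theta_k$ contains $B_3\mathbb{E}\widehat Y_N$. The duality identity, however, pairs the forward coefficients only with $(\widehat Y_k,\widehat Z_k)$ for $k\le N-1$, so in general a term like $|B_3\mathbb{E}\widehat Y_N|^2$ cannot be generated.

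To fix this, build the perturbation from $\widetilde\Theta$, namely $\Theta$ with $B_3\mathbb{E}y_N$ deleted from $\Theta_{N-\tau}$. Write $\|\cdot\|^2$ for $\mathbb{E}\sum_{k=0}^{N-1}|\cdot|^2$. The duality then yields exactly $-(1-\varepsilon_0)\beta\|\widehat{\widetilde\Theta}\|^2$. Monotonicity bounds $\varepsilon_0\|\widehat\Theta\|^2+(1-\varepsilon_0)\|\widehat{\widetilde\Theta}\|^2$ by $\delta C(\|\widehat U\|^2+\|\widehat u\|^2)$. This is precisely what the forward estimate needs, since $b,\sigma$ enter with weight $\varepsilon_0$ and the perturbation with weight $1-\varepsilon_0$, and $\varepsilon_0^2\le\varepsilon_0$, $(1-\varepsilon_0)^2\le 1-\varepsilon_0$.
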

\begin{proof}
The method of proof is similar to that of Theorem \ref{mainthe} under the condition $\beta_2>0$.
\end{proof}

Then we consider the LQ McKean-Vlasov optimal control problem with law-delay:
\begin{align}\label{LQstate}
\left\{\begin{array}{ll}
\Delta x_k =A_1x_k+A_2\mathbb{E}x_k+A_3\mathbb{E}x_{k-\tau}+B_1v_k+B_2\mathbb{E}v_k+B_3\mathbb{E}v_{k-\tau},\\
\qquad\quad+[C_1x_k+C_2\mathbb{E}x_k+C_3\mathbb{E}x_{k-\tau}+D_1v_k+D_2\mathbb{E}v_k+D_3\mathbb{E}v_{k-\tau}]\Delta W_k&\quad k\in\mathcal{T},
\\x_k=\varphi_k, &k\in[-\tau,0],
\end{array}\right.
\end{align}
to minimize the cost functional
\begin{align}\label{LQcost}
    J(v)=\frac{1}{2}\mathbb{E}\left[\sum_{k=0}^{N-1}\left[\langle Q_1x_k,x_k\rangle+\langle Q_2\mathbb{E}x_k,\mathbb{E}x_k\rangle+\langle Rv_k,v_k\rangle\right]+\langle G_1x_N,x_N\rangle+\langle G_2\mathbb{E}x_N,\mathbb{E}x_N\rangle\right].
\end{align}
Here $Q_1, Q_2, G_1,G_2\ge0, R>0$ are symmetric matrices and $A_i,B_i,C_i,D_i,i=1,2,3,$ are bounded random matrix with suitable dimensions.

Denote $\mathbb{U}$ by the set of admissible control process
\textbf{v}$=(v_k)_{k\in[-\tau, N-1]}$ taking values in $\mathbb{R}^k$ such that $v_k=\psi_k, k\in[-\tau,-1]$ and $\mathbb{E}\sum_{k=-\tau}^{N-1}|v_k|^2 <+\infty$. Here $\psi_t$ is a given $\mathcal{F}_0$-measurable process.
Our aim is to find the optimal control $v_k^{*}$ such that
\begin{align*}
    J(v_k^*)=\inf_{v_k\in \mathbb{U}}J(v_k)
.\end{align*}

\begin{theorem}
The function given by
\begin{align}\label{v}
v_k=\begin{cases}\psi_k, & k \in[-\tau, -1], 
\\
-R^{-1}\left(B_1y_k+B_2\mathbb{E}y_k+B_3\mathbb{E}y_{k+\tau}+D_1z_k+D_2\mathbb{E}z_k+D_3\mathbb{E}z_{k+\tau}\right), & t \in\mathcal{T}, 
\end{cases}
\end{align}    
is the unique optimal control for LQ problem (\ref{LQstate}) and (\ref{LQcost}), where $(x,y,z)$ is the solution to the following FBS$\Delta$E:
\begin{equation}\label{fbsde7}
\begin{cases}
\Delta x_k =A_1x_k+A_2\mathbb{E}x_k+A_3\mathbb{E}x_{k-\tau}+B_1v_k+B_2\mathbb{E}v_k+B_3\mathbb{E}v_{k-\tau},\\
\qquad\quad+[C_1x_k+C_2\mathbb{E}x_k+C_3\mathbb{E}x_{k-\tau}+D_1v_k+D_2\mathbb{E}v_k+D_3\mathbb{E}v_{k-\tau}]\Delta W_k&\quad k\in\mathcal{T},
\\
-\Delta y_k=\big[\mathbf{1}_{\{k<N-1\}}(A_1^\tau y_{k+1}+A_2^\tau \mathbb{E}y_{k+1}+A_3^\tau \mathbb{E}y_{k+\tau+1})+Q_1x_{k+1}+Q_2\mathbb{E}x_{k+1}\\
\qquad\qquad\quad+\mathbf{1}_{\{k<N-1\}}(C_1^\tau z_{k+1}+C_2^\tau \mathbb{E}z_{k+1}+C_3^\tau \mathbb{E}z_{k+\tau+1})\big]-z_k\Delta W_k, & k \in\mathcal{T}, \\
x_k=\varphi_k, &k\in[-\tau,0],\\ 
 y_N=G_1x_N+G_2\mathbb{E}x_N, \quad y_k=0, & k \in[N+1,N+\tau], \\ z_k=0, & k \in[N, N+\tau],\end{cases}
\end{equation}
where $v_k$ is given by Eq. (\ref{v}).
\end{theorem}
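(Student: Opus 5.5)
The proof has three parts: show that the Hamiltonian system (\ref{fbsde7}) with feedback (\ref{v}) has a unique solution, show that the resulting $v$ is optimal by a convexity/duality argument, and deduce uniqueness from strict convexity of $J$.

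\textbf{Step 1: well-posedness of (\ref{fbsde7}).} I will obtain this from the preceding theorem, i.e. by checking (H4)–(H5). Substituting (\ref{v}), the coefficients $b,\sigma$ depend on $(y,z)$ only through
\[
\Lambda_k:=B_1\widehat y_k+B_2\mathbb{E}\widehat y_k+B_3\mathbb{E}\widehat y_{k+\tau}+D_1\widehat z_k+D_2\mathbb{E}\widehat z_k+D_3\mathbb{E}\widehat z_{k+\tau},
\]
so the Lipschitz part of (H4) holds because the matrices are bounded.

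The delayed control terms $\mathbb{E}v_{k-\tau}$ bring in $\mathbb{E}y_k$ and $\mathbb{E}z_k$ again through index shifts; the anticipated terms $\mathbb{E}y_{k+\tau+1}$ in the adjoint play the dual role.

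For (H5) I will compute $\mathbb{E}\sum_k\langle\mathcal{A}(k,u')-\mathcal{A}(k,u),\widehat u_k\rangle$ directly. Three kinds of terms appear.

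\begin{itemize}
\item The $A_i,C_i$ terms cancel against the corresponding transposed terms in $f$, after shifting summation indices and using $\mathbb{E}\langle \mathbb{E}X,Y\rangle=\langle\mathbb{E}X,\mathbb{E}Y\rangle$. Here the boundary conditions $y_k=z_k=0$ for $k>N$ make the anticipated shifts match the delayed ones.
\item The $Q$ terms give $-\mathbb{E}\langle Q_1\widehat x,\widehat x\rangle-\langle Q_2\mathbb{E}\widehat x,\mathbb{E}\widehat x\rangle\le0$.
\item The control terms give $-\mathbb{E}\sum\langle R^{-1}\Lambda_k,\Lambda_k\rangle\le-\beta\,\mathbb{E}\sum|\Lambda_k|^2$ with $\beta=\lambda_{\min}(R^{-1})>0$.
\end{itemize}

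Finally, $G_1,G_2\ge0$ gives the terminal monotonicity in (H5).

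\textbf{Step 2: optimality.} Let $v'$ be any admissible control, with state $x'$, and put $\widehat x=x'-x$, $\widehat v=v'-v$. By convexity of the quadratic cost,
\[
J(v')-J(v)\ge \mathbb{E}\Big[\sum_{k=0}^{N-1}\big(\langle Q_1x_k,\widehat x_k\rangle+\langle Q_2\mathbb{E}x_k,\mathbb{E}\widehat x_k\rangle+\langle Rv_k,\widehat v_k\rangle\big)+\langle G_1x_N+G_2\mathbb{E}x_N,\widehat x_N\rangle\Big].
\]
Next I apply the discrete product rule $\Delta\langle \widehat x_k,y_k\rangle=\langle\Delta\widehat x_k,y_k\rangle+\langle\widehat x_{k+1},\Delta y_k\rangle$, as in (\ref{difference}), and sum from $0$ to $N-1$. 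Because $\widehat x_0=0$ and $y_N=G_1x_N+G_2\mathbb{E}x_N$, this expresses the terminal term through the dynamics. The $A,C$ contributions cancel against the adjoint drift. The leftover is
\[
\mathbb{E}\sum_k\langle B_1\widehat v_k+B_2\mathbb{E}\widehat v_k+B_3\mathbb{E}\widehat v_{k-\tau},\,y_k\rangle
\]
plus the analogous $D$-terms paired with $z_k$. The martingale increments have zero expectation, and the cross term $\mathbb{E}\langle\sigma\Delta W_k,z_k\Delta W_k\rangle$ gives $\langle\cdot,z_k\rangle$.

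Since $\widehat v_k=0$ for $k<0$, shifting the index in the delayed terms turns the leftover into $\mathbb{E}\sum_k\langle\widehat v_k,\Lambda\text{-type expression}\rangle$. By (\ref{v}) this cancels $\langle Rv_k,\widehat v_k\rangle$, so $J(v')\ge J(v)$.

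\textbf{Step 3: uniqueness.} With $R>0$, $Q_i,G_i\ge0$ and the state linear in $v$, the map $v\mapsto J(v)$ is strictly convex. Hence the minimizer is unique.

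\textbf{Main obstacle.} I expect the index bookkeeping to be the hardest part. In the discrete product rule, $\Delta y_k$ involves $x_{k+1}$ while $\Delta x_k$ involves $y_k$, and the adjoint is written at times $k+1$ and $k+\tau+1$. The boundary indicator $\mathbf{1}_{\{k<N-1\}}$ and the terminal conditions at $N$ and beyond must be handled exactly so that the $A_i,C_i$ terms and the delay/anticipation shifts cancel without leaving boundary residues. I would first write out $\sum_k\langle\widehat x_{k+1},\text{RHS}\rangle$ explicitly and check the cancellation at $k=N-1$ separately.
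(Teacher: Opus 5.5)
Your three steps follow the paper's own route: verify (H4)--(H5), run a duality argument via the discrete product rule plus convexity, and get uniqueness from strict convexity. The gap is the step you deferred as the ``main obstacle''. For the system as written it does not close.

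Carry out the sum of $\Delta\langle\widehat x_k,y_k\rangle$ over $k\in\mathcal{T}$, using $\widehat x_k=0$ for $k\le 0$, $\widehat v_k=0$ for $k<0$, the feedback $\Lambda_k=-Rv_k$, and deterministic coefficients. You get exactly
\[
\mathbb{E}\langle y_N,\widehat x_N\rangle=-\sum_{k=0}^{N-1}\mathbb{E}\langle Rv_k,\widehat v_k\rangle-\sum_{k=1}^{N}\mathbb{E}\big[\langle Q_1x_k,\widehat x_k\rangle+\langle Q_2\mathbb{E}x_k,\mathbb{E}\widehat x_k\rangle\big]-\langle A_3^\tau\mathbb{E}y_N,\mathbb{E}\widehat x_{N-\tau}\rangle-\langle B_3^\tau\mathbb{E}y_N,\mathbb{E}\widehat v_{N-\tau}\rangle .
\]
Two boundary mismatches appear here.
\begin{enumerate}
\item Your ``leftover'' drops the $Q$-terms, and they run up to $k=N$. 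The reason is that $Q_1x_{k+1}+Q_2\mathbb{E}x_{k+1}$ is not under $\mathbf{1}_{\{k<N-1\}}$, while the first variation of $J$ only has $k\le N-1$.
\item The boundary condition gives $y_k=0$ only for $k\ge N+1$, but $y_N=G_1x_N+G_2\mathbb{E}x_N\neq 0$. So $A_3^\tau\mathbb{E}y_{k+\tau+1}$ in the adjoint (at $k=N-\tau-1$) and $B_3\mathbb{E}y_{k+\tau}$ in the feedback (at $k=N-\tau$) reach $y_N$. The delayed sums $\sum_{k=0}^{N-1}\langle A_3\mathbb{E}\widehat x_{k-\tau}+B_3\mathbb{E}\widehat v_{k-\tau},y_k\rangle$ stop at $y_{N-1}$. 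The $C_3,D_3$ terms are harmless because $z_N=0$.
\end{enumerate}
Your convexity bound therefore only gives $J(v')-J(v)\ge-\mathbb{E}\langle Q_1x_N+Q_2\mathbb{E}x_N,\widehat x_N\rangle-\langle A_3^\tau\mathbb{E}y_N,\mathbb{E}\widehat x_{N-\tau}\rangle-\langle B_3^\tau\mathbb{E}y_N,\mathbb{E}\widehat v_{N-\tau}\rangle$, which has no sign. The same $\widehat y_N$-residues from $A_3$ and $B_3$ also break your Step 1 claim that ``the anticipated shifts match the delayed ones''.

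This is a defect of the statement itself, not just of your write-up. Take $n=m=d=1$, $N=2$, $\tau=1$, $B_1=1$, $Q_1=q>0$, $G_1=g\ge 0$, $R=r>0$, $\varphi_0=1$, and all other coefficients zero. Then the system is deterministic and $z\equiv 0$. The adjoint gives $y_1=y_2+qx_2=(g+q)x_2$, so the feedback is $v_1=-\tfrac{g+q}{r+g+q}x_1$, with $x_1\neq 0$ (one checks $x_1=r/(r+c)$ for some $c>0$). But for the same $v_0$, the cost is strictly minimized in $v_1$ at $-\tfrac{g}{r+g}x_1$, so the candidate is not optimal.

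The paper's proof asserts the same summation identity, with $\sum_{k=0}^{N-1}$ on the right, without checking these indices, so it has the same gap. Your argument does go through once the system is corrected:
\begin{enumerate}
\item put the $Q$-terms under $\mathbf{1}_{\{k<N-1\}}$ as well;
\item replace $\mathbb{E}y_{k+\tau+1}$ in the adjoint by $\mathbf{1}_{\{k+\tau+1<N\}}\mathbb{E}y_{k+\tau+1}$;
\item replace $\mathbb{E}y_{k+\tau}$ in the feedback by $\mathbf{1}_{\{k+\tau<N\}}\mathbb{E}y_{k+\tau}$, writing $B_i^\tau,D_i^\tau$.
\end{enumerate}
With these changes all boundary terms cancel in both Step 1 and Step 2.

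One more point: your identity $\mathbb{E}\langle A_2\mathbb{E}X,Y\rangle=\langle\mathbb{E}X,A_2^\tau\mathbb{E}Y\rangle$ requires the mean-field coefficient matrices to be deterministic. The statement allows bounded random ones.
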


\begin{proof}
We first show the existence and uniqueness of the solution to FBS$\Delta$Es (\ref{fbsde7}). It is clear that FBS$\Delta$Es (\ref{fbsde7}) satisfies (H4). 
For (H5),
\begin{align*}
\mathbb{E}\langle\Phi(x')-\Phi(x),\widehat{x}\rangle
=\mathbb{E}\langle G_1\widehat{x},\widehat{x}\rangle+\langle G_2\mathbb{E}\widehat{x},\mathbb{E}\widehat{x}\rangle\ge0,
\end{align*}
and
\begin{align*}
&\mathbb{E}\sum_{k=0}^N\langle\mathcal{A}(k,u')-\mathcal{A}(k,u),u'_k-u_k\rangle\\
=&-\sum_{k=0}^N\left[\mathbb{E}\langle Q_1\widehat{x}_k,\widehat{x}_k\rangle+\langle Q_2\mathbb{E}\widehat{x}_k,\mathbb{E}\widehat{x}_k\rangle\right]+\mathbb{E}\sum_{k=0}^N\langle \hat{y}_{k}  ,B_1\widehat{v}_k+B_2\mathbb{E}\widehat{v}_k+B_3\mathbb{E}\widehat{v}_{k-\tau} \rangle\\
&+\mathbb{E}\sum_{k=0}^N\langle \hat{z}_{k}  ,D_1\widehat{v}_k+D_2\mathbb{E}\widehat{v}_k+D_3\mathbb{E}\widehat{v}_{k-\tau} \rangle\\
\le&\mathbb{E}\sum_{k=0}^N \langle \hat{v}_{k}  ,B_1\widehat{y}_k+B_2\mathbb{E}\widehat{y}_k+B_3\mathbb{E}\widehat{y}_{k+\tau} \rangle 
+\mathbb{E}\sum_{k=0}^N \langle \hat{v}_{k}  ,D_1\widehat{z}_k+D_2\mathbb{E}\widehat{z}_k+D_3\mathbb{E}\widehat{z}_{k+\tau} \rangle \\
=&-R^{-1}\mathbb{E}\sum_{k=0}^N|B_1\widehat{y}_k+B_2\mathbb{E}\widehat{y}_k+B_3\mathbb{E}\widehat{y}_{k+\tau}+D_1\widehat{z}_k+D_2\mathbb{E}\widehat{z}_k+D_3\mathbb{E}\widehat{z}_{k+\tau}|^2.
\end{align*}
Choose $\beta=\left(\sup_{\|v\|=1}\langle Rv,v\rangle\right)^{-1/2}$, we show (H5) holds.

Now we show the $v_k$ given by Eq. (\ref{v}) is an optimal control. Let $v'_t$ be any other admissible control and $x'_t$ be the corresponding state process. Similar to Eq. (\ref{difference}), we have
\begin{align*}
&\mathbb{E}\langle G_1x_N,\widehat{x}_N\rangle+\langle G_2\mathbb{E}x_N,\mathbb{E}\widehat{x}_N\rangle
    \\
    =&\mathbb{E}\sum_{k=0}^{N-1}\Delta \langle \widehat{x}_k,y_k\rangle\\
    =&-\mathbb{E}\sum_{k=0}^{N-1}\left[\langle Q_1x_k,\widehat{x}_k\rangle+\langle Q_2\mathbb{E}x_k,\mathbb{E}\widehat{x}_k\rangle+\langle Rv_k,\widehat{v}_k\rangle\right]\\
    \ge&-\frac{1}{2}\mathbb{E}\sum_{k=0}^{N-1}\left[\langle Q_1x'_k,x_k'\rangle-\langle Q_1x_k,x_k\rangle+\langle Q_2\mathbb{E}x'_k,\mathbb{E}x_k'\rangle-\langle Q_2\mathbb{E}x_k,\mathbb{E}x_k\rangle+\langle Rv'_k,v'_k\rangle-\langle Rv_k,v_k\rangle\right].
\end{align*}
Combining with 
\begin{align*}
\mathbb{E}[\langle G_1x'_N,x'_N\rangle+\langle G_2\mathbb{E}x'_N,\mathbb{E}x'_N\rangle-\langle G_1x_N,x_N\rangle-\langle G_2\mathbb{E}x_N,\mathbb{E}x_N\rangle]\ge2[G_1x_N,\widehat{x}_N\rangle+\langle G_2\mathbb{E}x_N,\mathbb{E}\widehat{x}_N\rangle],
\end{align*}
We derive
\begin{align*}
J(v_k')-J(v_k)\ge0.
\end{align*}
For uniqueness,  assume that  both $v_k'$ and $v_k$ are optimal controls, $x_k'$ and $x_k$ are corresponding state processes, respectively. It is easy to see that $\frac{x_k'+x_k}{2}$ is the corresponding state process to $\frac{v_k'+v_k}{2}$. Assume that there exists constants $\alpha_1>0, \alpha_2\ge 0$, such that $R\ge \alpha_1I_{k\times k}$ and 
\begin{align*}
    J(v_t^1)=J(v_t^2)=\alpha_2.
\end{align*}
So we have
\begin{align*}
2\alpha_2=&J(v_k')+J(v_k)\\
           =&\frac{1}{2}\mathbb{E}\sum_{k=0}^{N-1}[\langle Q_1x_k',x_k'\rangle+\langle Q_1 x_k,x_k\rangle+\langle Q_2\mathbb{E}x_k',\mathbb{E}x_k'\rangle+\langle Q_2\mathbb{E}x_k,\mathbb{E}x_k\rangle+\langle Rv_k',v_k'\rangle+\langle Rv_k,v_k\rangle]\\
           &+\frac{1}{2}\mathbb{E}\left[\langle G_1x_k',x_k'\rangle+\langle G_1x_k,x_k\rangle+\langle G_2\mathbb{E}x_k',\mathbb{E}x_k'\rangle+\langle G_2\mathbb{E}x_k,\mathbb{E}x_k\rangle\right]\\
           \ge&\mathbb{E}\sum_{k=0}^{N-1} \left[\left\langle Q_1\frac{x_k'+x_k}{2},\frac{x_k'+x_k}{2}\right\rangle+\left\langle Q_2\frac{\mathbb{E}x_k'+\mathbb{E}x_k}{2},\frac{\mathbb{E}x_k'+\mathbb{E}x_k}{2}\right\rangle+\left\langle R\frac{v_k'+v_k}{2},\frac{v_k'+v_k}{2}\right\rangle\right]\\
           &+\mathbb{E}\left\langle G_1\frac{x_N'+x_N}{2},\frac{x_N'+x_N}{2}\right\rangle+\left\langle G_2\frac{\mathbb{E}x_N'+\mathbb{E}x_N}{2},\frac{\mathbb{E}x_N'+\mathbb{E}x_N}{2}\right\rangle+\mathbb{E}\sum_{k=0}^{N-1}\left\langle R\frac{v_k'-v_k}{2},\frac{v_k'-v_k}{2}\right\rangle\\
           =&2J\Big(\frac{v_k'+v_k}{2}\Big)+\mathbb{E}\sum_{k=0}^{N-1}\left\langle R\frac{v_k'-v_k}{2},\frac{v_k'-v_k}{2}\right\rangle \\
           \ge&2\alpha_2+\frac{\alpha_1}{4}\mathbb{E}\sum_{k=0}^{N-1}|v_k'-v_k|^2,
\end{align*}
which implies that $\mathbb{E}\sum_{k=0}^{N-1}|v_k'-v_k|^2=0$.
\end{proof}


\bibliographystyle{cas-model2-names}

\bibliography{cas-refs}



\end{document}